\documentclass[12pt,british,reqno]{amsart}

\usepackage{babel}

\usepackage[font=small,format=plain,labelfont=bf,up,textfont=it,up]{caption}

\usepackage[utf8]{inputenc} 
\newcommand{\fecho}[1]{\ensuremath{\langle\langle #1 \rangle\rangle}}

\usepackage{cancel}

\usepackage{listings}

\usepackage{faktor}

\usepackage[T1]{fontenc}
\DeclareFontFamily{T1}{calligra}{}
\DeclareFontShape{T1}{calligra}{m}{n}{<->s*[1.44]callig15}{}

\DeclareMathAlphabet\mathrsfso      {U}{rsfso}{m}{n}

\usepackage{amsthm,amssymb,amsmath} 
\usepackage{enumerate} 
\usepackage[all]{xy}
\usepackage{mathtools}

\makeatletter
\def\@map#1#2[#3]{\mbox{$#1 \colon\thinspace #2 \longrightarrow #3$}}
\def\map#1#2{\@ifnextchar [{\@map{#1}{#2}}{\@map{#1}{#2}[#2]}}
\g@addto@macro\@floatboxreset\centering
\makeatother

\renewcommand{\epsilon}{\ensuremath{\varepsilon}}
\renewcommand{\phi}{\ensuremath{\varphi}}

\renewcommand{\to}{\ensuremath{\longrightarrow}}

\newcommand{\R}{\ensuremath{\mathbb R}}
\newcommand{\N}{\ensuremath{\mathbb N}}
\newcommand{\Z}{\ensuremath{\mathbb{Z}}}

\newcommand{\ang}[1]{\ensuremath{\left\langle #1\right\rangle}}

\newcommand{\setang}[2]{\ensuremath{\ang{#1 \,\mid\, #2}}}

\newtheoremstyle{theoremm}{}{}{\itshape}{}{\scshape}{.}{ }{}
\theoremstyle{theoremm}
\newtheorem{thm}{Theorem}
\newtheorem{lem}[thm]{Lemma}
\newtheorem{prop}[thm]{Proposition}

\newtheoremstyle{remark}{}{}{}{}{\scshape}{.}{ }{}

\theoremstyle{remark}

\newtheorem{rem}[thm]{Remark}

\setlength{\textwidth}{16.1cm}
\setlength{\textheight}{24cm}
\setlength{\oddsidemargin}{0.6cm}
\setlength{\evensidemargin}{0.6cm}
\setlength{\topmargin}{-1.0cm}

\numberwithin{equation}{section}
\allowdisplaybreaks

\usepackage[pdftex,%
bookmarks=true,
breaklinks=true,
bookmarksnumbered = true,
colorlinks= true,
urlcolor= green,
anchorcolor = yellow,
citecolor=blue,
]{hyperref}                   

\usepackage[x11names]{xcolor}

\begin{document}

\title[Coxeter-type quotients of surface braid groups]{Coxeter-type quotients of surface braid groups}
 
\author[R.~Diniz]{Renato Diniz}
\address{Universidade Federal do Rec\^oncavo da Bahia - CFP, Av. Nestor de Melo Pita, 535, CEP:45.300.000 - Amargosa - BA - Brasil}
\email{renatodiniz@ufrb.edu.br}

\author[O.~Ocampo]{Oscar Ocampo}
\address{Universidade Federal da Bahia, Departamento de Matem\'atica - IME, Av.~Milton Santos~S/N, CEP:~40170-110 - Salvador - BA - Brazil}
\email{oscaro@ufba.br}

\author[P.~C.~C.~Santos J\'unior]{Paulo Cesar Cerqueira dos Santos J\'unior}
\address{Secretaria da Educa\c{c}\~ao do Estado da Bahia, SEC-BA, $5^{a}$ Avenida N$^\circ 550$, centro administrativo da Bahia - CAB, CEP:~41745-004 - Salvador - BA - Brazil}
\email{pcesarmath@gmail.com}

\subjclass[2020]{Primary: 20F36; Secondary:  20F05.}
\date{\today}

\keywords{Artin braid group, Surface braid group, Finite group.}

\date{\today}

\begin{abstract}
\noindent
Let $M$ be a closed surface, 
$q\geq 2$ and $n\geq 2$. In this paper, we analyze the Coxeter-type quotient group $B_n(M)(q)$ of the surface braid group $B_{n}(M)$ by the normal closure of the element $\sigma_1^q$, where $\sigma_1$ is the classic Artin generator of the Artin braid group $B_n$.
Also, we study the Coxeter-type quotient groups obtained by taking the quotient of $B_n(M)$ by the commutator subgroup of the respective pure braid group $[P_n(M),P_n(M)]$ and adding the relation $\sigma_1^q=1$, when $M$ is a  closed orientable surface or the disk. 
\end{abstract}

\maketitle

\section{Introduction}\label{sec:intro}

The braid groups of the $2$-disk, or Artin braid groups, were introduced by Artin in 1925 and further studied in 1947~\cite{A1,A2}. Surface braid groups were initially studied by Zariski~\cite{Z}, and were later generalized by Fox and Neuwirth to braid groups of arbitrary topological spaces using configuration spaces as follows~\cite{FoN}. Let $S$ be a compact, connected surface, and let $n\in \mathbb N$. The \textit{$n$th ordered configuration space of $S$}, denoted by $F_{n}(S)$, is defined by:
\begin{equation*}
F_n(S)=\left\{(x_{1},\ldots,x_{n})\in S^{n} \mid x_{i}\neq x_{j}\,\, \text{if}\,\, i\neq j;\,i,j=1,\ldots,n\right\}.
\end{equation*}
The \textit{$n$-string pure braid group $P_n(S)$ of $S$} is defined by $P_n(S)=\pi_1(F_n(S))$. The symmetric group $S_{n}$ on $n$ letters acts freely on $F_{n}(S)$ by permuting coordinates, and the \textit{$n$-string braid group $B_n(S)$ of $S$} is defined by $B_n(S)=\pi_1(F_n(S)/S_{n})$. This gives rise to the following short exact sequence:
\begin{equation}\label{eq:ses}
1 \to P_{n}(S) \to B_{n}(S) \stackrel{\sigma}{\longrightarrow} S_{n} \to 1.
\end{equation}
The map $\map{\sigma}{B_{n}(S)}[S_{n}]$ is the standard homomorphism that associates a permutation to each element of $S_{n}$. 
We note the following:
    \begin{enumerate}
        \item When $M=D^2$ is the disk then $B_n(D^2)$ (resp.\ $P_n(D^2)$) is the classical Artin braid group denoted by $B_n$ (resp.\ the classical pure Artin braid group denoted by $P_n$).
        
        \item     Follows from the definition that $F_1(S)=S$ for any surface $S$, the groups $P_1(S)$ and $B_1(S)$ are isomorphic to $\pi_1(S)$.
        For this reason, braid groups over the surface $S$ may be seen as generalizations of the fundamental group of $S$.
    \end{enumerate}

For more information on general aspects of surface braid groups we recommend \cite{Ha} and also the survey \cite{GPi}, in particular its Section~2 where equivalent definitions of these groups are given, showing different viewpoints of them. 
We recall that the Artin braid group $B_n$ admits the following presentation~\cite{A1}:
\begin{equation}\label{eq:presbn}
\bigg\langle \sigma_1, \ldots\, , \sigma_{n-1} \ \bigg\vert \ 
\begin{matrix}
\sigma_{i} \sigma_j = \sigma_j \sigma_{i} 
&\text{for} &\vert  i-j\vert > 1\\
\sigma_{i} \sigma_j \sigma_{i} = \sigma_j \sigma_{i} \sigma_j 
&\text{for} &\vert  i-j\vert  = 1
\end{matrix}
\ \bigg\rangle.
\end{equation}

It is well known that the symmetric group $S_n$ admits the following presentation:
$$
S_n=\left\langle \sigma_1,\ldots,\sigma_{n-1} \mid
\begin{array}{l}
\sigma_i\sigma_{i+1}\sigma_i=\sigma_{i+1}\sigma_i\sigma_{i+1} \textrm{ for } 1\leq i\leq n-2\\
\sigma_i\sigma_j=\sigma_j\sigma_i \textrm{ for } \left|i-j\right|\geq 2\\
\sigma_1^2=1
\end{array}
\right\rangle.
$$
Let $\fecho{g}$ denote the normal closure of an element $g$ in a group $G$.
Hence, from \eqref{eq:presbn} it is clear that $\faktor{B_n}{\fecho{ \sigma_1^2}}$ is isomorphic with $S_n$.

Let $B_n(2)=\faktor{B_n}{\fecho{ \sigma_1^2}}$. 
Notice that $B_n(2)$ is a finite group, while the braid group $B_n$ is an infinite torsion-free group. The question that naturally appears is whether the groups $B_n(k) = \faktor{B_n}{\fecho{ \sigma_1^k}}$ are finite for every $k\geq3$. 
The answer to this problem was given by Coxeter \cite{Co} using classical geometry and giving an unexpected connection between braid groups and platonic solids, see Figure~\ref{fig:platonics}, showing that $B_n(k)$ is finite if and 
only if $(k-2)(n-2)<4$, see Theorem~\ref{thm:coxeter} (see also \cite[Chapter~5, Proposition~2.2]{MK}). 

The complete statement of Coxeter's result is formulated in Subsection~\ref{sec:coxeter}. It is worth noting that it was proved differently by Assion \cite{As} using Buraus's representation of the braid groups.
Assion gave also a presentation of some symplectic groups as quotient of braid groups and it was improved by 
Wajnryb \cite{W} giving a braidlike presentation of the symplectic group $Sp(n,p)$. More recently, in \cite{BDOS} Coxeter's result is used to study the relationship between level $m$ congruence subgroups $B_n[m]$ and the normal closure of the element $\sigma_1^m$. In particular, they characterize when the normal closure of the element $\sigma_1^m$ has finite index in $B_n[m]$ and provide explicit generators for the finite quotients.

Motivated by Coxeter's work on Artin braid groups, we are interested in this problem for surface braid groups. 
From now on, let $B_{n}(M)(q)$ denote the quotient of the surface braid group $B_{n}(M)$ by the normal closure of the element $\sigma_1^q$ , where $\sigma_1$ is the classic Artin generator of the Artin braid group $B_n$ permuting the first two strands \cite{A1}. 

Our main purpose here is to study the Coxeter-type quotient of surface braid groups $B_n(M)(q)$. 
In contrast to the classical case of the disk, in this paper, we show that for every closed surface different from the sphere and the projective plane, the quotient group $B_n(M)(q)$ is infinite for all $n,q \geq 3$. 
In Subsection~\ref{subsec:kpi1} we prove the following result.

\begin{thm}
\label{thm:mainsurface}
Let $q\geq 3$ and $n\geq 2$ integers. 
Let $M$ be a closed surface 
different from the sphere and the projective plane. 
\begin{enumerate}
    \item\label{item:mainsurface1} If $M$ is orientable then the abelianization of the group  $B_n(M)(q)$ is isomorphic to ${\mathbb Z_q} \oplus H_1(M)$.
    
    \item\label{item:mainsurface2} 
    If $M$ is non-orientable then the abelianization of the group  $B_n(M)(q)$ is isomorphic to
$$    
\begin{cases}
H_1(M) & \text{if $q$ is odd},\\
{\mathbb Z_2} \oplus H_1(M) & \text{if $q$ is even}.
\end{cases}
 $$
 
    \item\label{item:mainsurface3}  For any surface $M$ different from the sphere and the projective plane, the group  $B_n(M)(q)$ is infinite.
\end{enumerate}
\end{thm}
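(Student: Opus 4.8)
The plan is to deduce all three parts from a single homological computation. Because abelianisation is right–exact, for any group $G$ and any $g\in G$ one has $\bigl(G/\fecho{g}\bigr)^{\mathrm{ab}}\cong G^{\mathrm{ab}}/\langle \overline{g}\rangle$, where $\overline{g}$ is the image of $g$ in $G^{\mathrm{ab}}$. Taking $G=B_n(M)$ and $g=\sigma_1^{q}$ this gives
$$
B_n(M)(q)^{\mathrm{ab}}\;\cong\;H_1(B_n(M))\big/\big\langle\, q\,\overline{\sigma_1}\,\big\rangle ,
$$
so the whole content of (\ref{item:mainsurface1}) and (\ref{item:mainsurface2}) is to identify $H_1(B_n(M))$ together with the distinguished class $s:=\overline{\sigma_1}$, and then to factor out $qs$. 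I would start from one of the standard finite presentations of $B_n(M)$ (Bellingeri / Gonz\'alez-Meneses), whose generating set consists of the Artin generators $\sigma_1,\dots,\sigma_{n-1}$ together with the surface generators, and abelianise it. The braid relations immediately force $\overline{\sigma_1}=\dots=\overline{\sigma_{n-1}}=s$, and the surface generators map onto $H_1(M)$; everything then reduces to reading off the order of $s$ and how $s$ sits with respect to the $H_1(M)$–summand.

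For part (\ref{item:mainsurface1}), with $M$ orientable of genus $g\ge 1$, the surface generators are $a_1,b_1,\dots,a_g,b_g$; the relations coupling $\sigma_1$ to them are commutators, and the surface relation equates a product of commutators $\prod_r[a_r,b_r]$ to a braiding word in the $\sigma_i$. On abelianising, the commutators vanish and one must extract precisely the resulting relation on $s$. The computation to be carried out is that these relations leave $s$ of infinite order, so that $H_1(B_n(M))\cong \mathbb Z\langle s\rangle\oplus H_1(M)$; quotienting by $qs$ then turns the free summand into $\mathbb Z_q$, yielding $\mathbb Z_q\oplus H_1(M)$.

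For part (\ref{item:mainsurface2}) the decisive difference is orientation: with cross-cap generators $a_1,\dots,a_k$, dragging a strand through a cross-cap reverses the sense of a crossing, which is recorded by a relation of the shape $a_r\sigma_1 a_r^{-1}=\sigma_1^{-1}$. Abelianising this relation gives $s=-s$, i.e. $2s=0$, so $s$ is $2$–torsion; and $s\neq 0$ because the sign homomorphism $B_n(M)\to S_n\to\mathbb Z_2$ sends $\sigma_1$ to the nontrivial element. Hence $H_1(B_n(M))\cong\mathbb Z_2\langle s\rangle\oplus H_1(M)$, and factoring out $qs$ leaves $\mathbb Z_{\gcd(2,q)}\oplus H_1(M)$, which is $H_1(M)$ when $q$ is odd and $\mathbb Z_2\oplus H_1(M)$ when $q$ is even.

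Part (\ref{item:mainsurface3}) is then immediate: for every closed surface $M$ other than $\St$ and $\rp$ the group $H_1(M)$ is infinite — it is $\mathbb Z^{2g}$ with $g\ge 1$ in the orientable case and $\mathbb Z^{\,k-1}\oplus\mathbb Z_2$ with $k\ge 2$ in the non-orientable case — and since $H_1(M)$ is a direct summand of $B_n(M)(q)^{\mathrm{ab}}$, itself a quotient of $B_n(M)(q)$, the group $B_n(M)(q)$ is infinite. I expect the genuine difficulty to lie entirely in the middle sentences of the two cases above, namely in checking against the explicit presentation exactly which relations survive abelianisation and so in pinning down the order of $s$; this is the one step where the orientable and non-orientable surfaces really part company, the surrounding reductions being formal.
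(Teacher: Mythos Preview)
Your strategy is correct and coincides with the paper's: abelianise a presentation of $B_n(M)$ containing the Artin generators, read off the class $s=\overline{\sigma_1}$ and the $H_1(M)$ summand, then kill $qs$. Your packaging via $(G/\fecho{g})^{\mathrm{ab}}\cong G^{\mathrm{ab}}/\langle\bar g\rangle$ is a tidy way to organise this; the paper instead abelianises the presentation of $B_n(M)(q)$ directly (working from Scott's presentation), but this is the same computation, and part~(\ref{item:mainsurface3}) is deduced identically from the infinitude of $H_1(M)$.

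One point to flag in the non-orientable case: the relation $a_r\sigma_1 a_r^{-1}=\sigma_1^{-1}$ you invoke is not how $2s=0$ actually arises in the standard presentations (Scott, Van Buskirk). In Scott's presentation the mechanism is a commutator-type relation: certain elements $B_{i,j}$, defined as conjugates of $\sigma_i^2$, are set equal to commutators in the surface generators, and it is \emph{this} that forces $\sigma_i^2=1$ in the abelianisation (compare relation~V in the $\rp$ presentation, $[\rho_{i+1}^{-1},\rho_i^{-1}]=\sigma_i^2$). Your geometric heuristic about crosscaps reversing crossing sense is pointing at the right phenomenon, but when you carry out the check you already flagged as the ``genuine difficulty'' you will find the algebraic relation is not literally a conjugation of that shape. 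This does not affect the outcome --- the order of $s$ is indeed $2$ --- only the justification.
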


We note that Theorem~\ref{thm:mainsurface} is also true for $q=2$. 
For instance, in \cite[P.~226]{GMP}, the authors claimed that for closed orientable surfaces, of genus $g\geq 1$, the quotient group $B_n(M)(2)$  is isomorphic to $\pi_1(M)^n\rtimes S_n$. So, it is infinite.

In Subsection~\ref{subsec:s2} we analyze the cases where $M$ is the sphere or the projective plane. We compute the abelianization of $B_n(M)(q)$ and prove the following result for sphere braid groups with few strings.

\begin{thm}
\label{thm:s2}
Let $q\geq 3$.
\begin{enumerate}
    \item $B_2(\mathbb S^2)(q)=
    \begin{cases}
    \Z_2 & \text{if $q$ is even},\\
    \{1\} & \text{if $q$ is odd}.
    \end{cases}
    $
    
    \item $B_3(\mathbb S^2)(q)\cong 
    \begin{cases}
    B_3(\mathbb S^2) & \text{if $gcd(4,q)=4$},\\
    S_3 & \text{if $gcd(4,q)=2$},\\
    \{1\} & \text{if $gcd(4,q)=1$}.
    \end{cases}
    $
    
    \item $B_4(\mathbb S^2)(q)$ is an infinite group if and only if $q\geq 6$.
\end{enumerate}
\end{thm}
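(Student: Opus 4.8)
The plan is to handle the three cases uniformly by first pinning down the order of $\sigma_1$ in $B_n(\St)$ and then reading off the normal closure of $\sigma_1^q$. For (1), the sphere relation for $n=2$ collapses the presentation to $\langle\sigma_1\mid\sigma_1^2=1\rangle\cong\Z_2$, so $\sigma_1$ has order $2$; hence $\sigma_1^q=1$ when $q$ is even (the normal closure $\fecho{\sigma_1^q}$ is trivial and $B_2(\St)(q)\cong\Z_2$), while $\sigma_1^q=\sigma_1$ when $q$ is odd (so $\fecho{\sigma_1^q}$ is everything and the quotient is trivial). For (2), I would use that $B_3(\St)$ is the dicyclic group of order $12$, in which $\sigma_1^2$ equals the central full twist $\Delta^2=(\sigma_1\sigma_2)^3$, an involution generating $P_3(\St)\cong\Z_2$; in particular $\sigma_1$ has order $4$. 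The class of $\sigma_1^q$ then depends only on $\gcd(4,q)$: if $\gcd(4,q)=4$ then $\sigma_1^q=1$ and the quotient is $B_3(\St)$; if $\gcd(4,q)=2$ then $\sigma_1^q=\Delta^2$, whose normal closure is $\langle\Delta^2\rangle=P_3(\St)$, giving $B_3(\St)/P_3(\St)\cong S_3$; and if $\gcd(4,q)=1$ then $\sigma_1^q\in\{\sigma_1,\sigma_1^{-1}\}$ normally generates everything, so the quotient is trivial.

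The substantive case is (3), and the guiding principle is that $q\geq 6$ is exactly the threshold at which the von Dyck triangle group $D(2,3,q)=\langle X,Y\mid X^2=Y^3=(XY)^q=1\rangle$ becomes infinite: it equals $A_4,S_4,A_5$ for $q=3,4,5$ and is a Euclidean/hyperbolic group for $q\geq 6$, since $\tfrac12+\tfrac13+\tfrac1q\leq 1$ iff $q\geq 6$. I would prove that $B_4(\St)(q)$ is a finite extension of $D(2,3,q)$, from which the equivalence follows. Concretely, using the full twist $\Delta^2=(\sigma_1\sigma_2\sigma_3)^4$, which is central of order dividing $2$, I expect to identify $B_4(\St)(q)/\fecho{\Delta^2}$ with $D(2,3,q)$ through Tietze transformations of the presentation of $B_4(\St)$ augmented by $\sigma_1^q=1$ and $\Delta^2=1$: the generators of orders $2$ and $3$ arise from the torsion elements $\sigma_1\sigma_2\sigma_3$ and $\sigma_1\sigma_2\sigma_3\sigma_1$ of $B_4(\St)$ (of orders $8$ and $6$, hence of orders $4$ and $3$ modulo $\Delta^2$), while $\sigma_1$ supplies the order-$q$ part. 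This would exhibit a central extension
\begin{equation*}
1 \longrightarrow \langle\overline{\Delta^2}\rangle \longrightarrow B_4(\St)(q) \longrightarrow D(2,3,q) \longrightarrow 1
\end{equation*}
with $\langle\overline{\Delta^2}\rangle$ of order at most $2$, so that $B_4(\St)(q)$ is finite if and only if $D(2,3,q)$ is, i.e.\ if and only if $q\leq 5$, the finite quotients being the binary tetrahedral, octahedral and icosahedral groups of orders $24,48,120$.

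In practice I would split the two implications. For ``$q\geq 6\Rightarrow$ infinite'' it is enough to construct an epimorphism $B_4(\St)(q)\twoheadrightarrow D(2,3,q)$: assign to $\sigma_1,\sigma_2,\sigma_3$ suitable conjugate order-$q$ elements of $D(2,3,q)$ and verify the far-commutativity, braid, and sphere relations together with $\sigma_i^q=1$; this is a finite check, and infiniteness of $D(2,3,q)$ for $q\geq 6$ then forces $B_4(\St)(q)$ infinite. For ``$q\leq 5\Rightarrow$ finite'' I would instead show the kernel of this epimorphism is finite, equivalently that the image of $P_4(\St)$ in $B_4(\St)(q)$ collapses to a finite group once $\sigma_1^q=1$ is imposed; a clean route, given the central extension above, is a direct coset enumeration verifying $|B_4(\St)(q)|=2\,|D(2,3,q)|\in\{24,48,120\}$. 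The main obstacle is precisely this finiteness direction: proving that imposing $\sigma_1^q=1$ forces the a priori infinite pure part to become finite, i.e.\ controlling the kernel of the map onto the triangle group. The fact that the abelianization of $B_4(\St)(q)$ is $\Z_{\gcd(6,q)}$, which matches the abelianization of $D(2,3,q)$, is a reassuring consistency check but does not by itself bound the kernel; the honest work lies in the Tietze reduction or the coset enumeration.
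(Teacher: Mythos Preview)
Your arguments for items (1) and (2) are correct and essentially coincide with the paper's: both reduce to computing $\gcd(q,\mathrm{ord}(\sigma_1))$ in the finite groups $B_2(\St)\cong\Z_2$ and $B_3(\St)$ (of order~$12$, with $\sigma_1$ of order~$4$), and then identifying the relevant normal closures. Your extra observation that $\sigma_1^2=\Delta^2$ generates $P_3(\St)$ makes the $\gcd(4,q)=2$ case particularly transparent.

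For item (3), your ``$q\geq 6\Rightarrow$ infinite'' strategy---map onto the triangle group $D(2,3,q)$---is exactly what the paper does, but the paper realises the surjection differently and more directly: it kills $\sigma_1\sigma_3^{-1}$ rather than $\Delta^2$. Setting $\sigma_3=\sigma_1$ collapses the sphere relation to $(\sigma_1\sigma_2)^3=1$, and one reads off
\[
B_4(\St)(q)\big/\fecho{\sigma_1\sigma_3^{-1}}\;\cong\;\setang{a,b}{a^2=b^3=(ab)^q=1}\;=\;D(2,3,q),
\]
with $a=\sigma_1\sigma_2\sigma_1$, $b=\sigma_1\sigma_2$. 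This is the clean Tietze computation you were aiming for, but through a larger normal subgroup than $\langle\Delta^2\rangle$.

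Where your proposal breaks down is the structural claim underpinning both directions: that $B_4(\St)(q)/\fecho{\Delta^2}\cong D(2,3,q)$, giving a central extension of index at most~$2$. This is false. The paper computes (via GAP) that $\lvert B_4(\St)(4)\rvert=192$, whereas your picture would force $\lvert B_4(\St)(4)\rvert\leq 2\cdot\lvert S_4\rvert=48$. Likewise your predicted orders $24,48,120$ for $q=3,4,5$ (the binary polyhedral groups) are incorrect: the actual orders are $12,192,60$, with $B_4(\St)(3)\cong A_4$ and $B_4(\St)(5)\cong A_5$. The point is that $\langle\Delta^2\rangle$ is much smaller than the kernel of the map onto $D(2,3,q)$; indeed $\Delta^2$ maps to~$1$ in $D(2,3,q)$ (since $(\sigma_1\sigma_2\sigma_1)^2=(\sigma_1\sigma_2)^3=1$ there), but $\sigma_1\sigma_3^{-1}$ does not lie in $\langle\Delta^2\rangle$. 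So the Tietze reduction you sketch cannot succeed, and neither can the coset enumeration with those target orders. The paper's route for finiteness is simply a direct machine computation of the three groups $B_4(\St)(q)$, $q=3,4,5$; you correctly identified this as the hard direction, but the extension framework you propose does not control it.
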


Finally, in Section~\ref{sec:cryst} we show that the quotient group $\faktor{B_n(M)}{[P_n(M), P_n(M)]}(q)$ is finite when $M$ is the disk, see Theorem~\ref{Coxeimpar}, and that it is infinite when $M$ is a closed orientable surface $M$ of genus $g\geq 1$, see Proposition~\ref{prop:surfcrystcoxeter}, where $q\geq 3$, $n \geq 2$ and $[P_n(M), P_n(M)]$ is the commutator subgroup of the pure braid group of the surface $M$.

\subsection*{Acknowledgments}

The second named author would like to thank Eliane Santos, all HCA staff, Bruno Noronha, Luciano Macedo, Marcio Isabela, Andreia de Oliveira Rocha, Andreia Gracielle Santana, Ednice de Souza Santos, and Vinicius Aiala for their valuable help since July  2024, 
without whose support this work could not have been completed. 
O.O.~was partially supported by National Council for Scientific and Technological Development - CNPq through a \textit{Bolsa de Produtividade} 305422/2022-7.

\section{Coxeter-type quotients of surface braid groups}\label{sec:surfaces}

Our main purpose is to study the Coxeter-type quotient of surface braid groups $B_n(M)(q)$ obtained by considering $\sigma_1^q=1$, for $q\geq 3$ and where $\sigma_1$ is the classical Artin generator, see \cite{A1}. 
We will use presentations of surface braid groups that have in the set of generators, the Artin generators.

We start this section with the following elementary result that will be useful in this work.

\begin{lem}
\label{lem:bezout}
    Let $a$ and $b$ positive integers and let $g$ be an element in a group $G$. 
    If $g^a=1$ and $g^b=1$ then $g^d=1$, where $d=gcd(a, b)$ denote the greatest common divisor of the integers $a$ and $b$. 
\end{lem}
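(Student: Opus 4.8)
The plan is to invoke Bézout's identity, which is surely the reason for the label attached to this lemma. Since $d = \gcd(a,b)$, there exist integers $x$ and $y$, not necessarily positive, such that $ax + by = d$. The entire argument then reduces to substituting this expression into the exponent of $g$ and using the two hypotheses.

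First I would write $g^d = g^{ax+by}$, and, since all powers of a single element commute with one another, factor this as $g^d = (g^a)^x (g^b)^y$. Invoking the hypotheses $g^a = 1$ and $g^b = 1$, each factor collapses separately: $(g^a)^x = 1^x = 1$ and $(g^b)^y = 1^y = 1$, and multiplying gives $g^d = 1$, as required.

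The only point that deserves a word of care is that $x$ or $y$ may be negative, so that $g^{ax}$ or $g^{by}$ involve inverses of $g$. This causes no trouble precisely because $g$ is an element of a \emph{group}: the inverse $g^{-1}$ exists, and once $g^a = 1$ one has $(g^a)^x = 1$ for every integer $x$, irrespective of its sign (the statement would fail in a general monoid, which is why this observation matters). Accordingly, I do not anticipate any genuine obstacle; the result follows immediately from Bézout's identity combined with the invertibility available in a group.
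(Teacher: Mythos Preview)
Your proof is correct and follows exactly the approach of the paper: the paper's proof is a single sentence invoking B\'ezout's identity $d = au + bv$, and you have simply spelled out the substitution $g^d = (g^a)^x(g^b)^y = 1$ that this identity yields. Your remark about negative exponents being harmless in a group is a nice clarification, but there is no substantive difference between the two arguments.
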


\begin{proof}
This result is a consequence of the Bezout's identity: If $a$ and $b$ are integers (not both $0$), then there exist integers $u$ and $v$ such that $gcd(a, b) = au + bv$, see \cite[Theorem~1.7, Section~1.2]{JJ}.
\end{proof}

\subsection{Coxeter's result for the disk}\label{sec:coxeter}

In this section we recall Coxeter's result for braid groups over the disk that strongly motivates this paper. 
Let $P$ denote one of the 5 platonic polyhedra (see Figure~\ref{fig:platonics}) and $\Sigma$ one of the faces of $P$, that corresponds to a regular polygon. 

\begin{figure}[!htb]
\begin{minipage}[b]{0.32\linewidth}
\centering
\includegraphics[width=0.3\columnwidth]{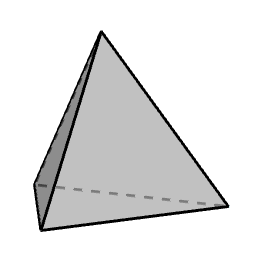}
\caption*{Tetrahedron}
\end{minipage} \hfill
\begin{minipage}[b]{0.32\linewidth}
\centering
\includegraphics[width=0.3\columnwidth]{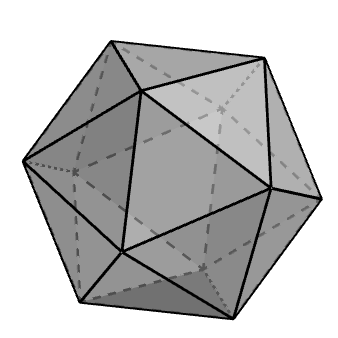}
\caption*{Icosahedron}
\end{minipage}
\begin{minipage}[b]{0.32\linewidth}
\centering
\includegraphics[width=0.3\columnwidth]{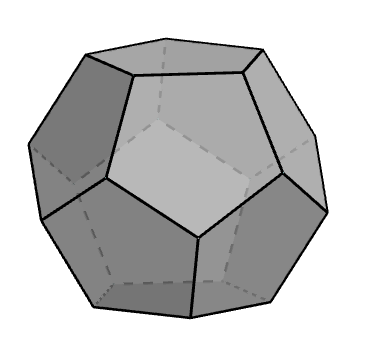}
\caption*{Dodecahedron}
\end{minipage} \hfill
\begin{minipage}[b]{0.32\linewidth}
\centering
\includegraphics[width=0.3\columnwidth]{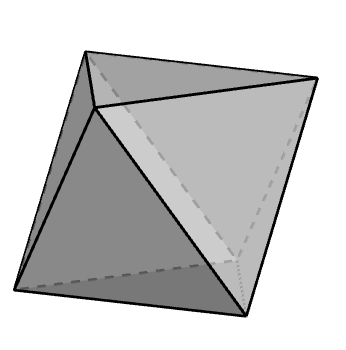}
\caption*{Octahedron}
\end{minipage} \hfill
\begin{minipage}[b]{0.32\linewidth}
\centering
\includegraphics[width=0.3\columnwidth]{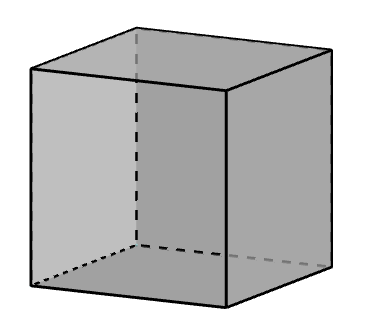}
\caption*{Cube}
\end{minipage}
\caption{The five regular polyhedra}
\label{fig:platonics}
\end{figure}

We numerically code $P$ by means of a pair of integers $(n,p)$, where
\begin{itemize}
	\item $n$ is the number of edges of $\Sigma$.
	\item $p$ is the number of polygons $\Sigma$ that meet at each vertex of $P$.
\end{itemize}
The integer pair $(n,p)$ is called the type of $P$. 
Now we state the unexpected result obtained by Coxeter about the groups $B_n(p)$.

\begin{thm}{\cite{Co}}
\label{thm:coxeter}
    Suppose $p\geq3$ and $B_n(p)$ is the quotient group derived from the $n$-braid group $B_n$ by adding one and only one relation $\sigma_1^p=1$   
$$
B_n(p)=\left\langle \sigma_1,\ldots,\sigma_{n-1} \mid
\begin{array}{l}
\sigma_i\sigma_{i+1}\sigma_i=\sigma_{i+1}\sigma_i\sigma_{i+1} \textrm{ for } 1\leq i\leq n-2\\
\sigma_i\sigma_j=\sigma_j\sigma_i \textrm{ for } \left|i-j\right|\geq 2\\
\sigma_1^p=1
\end{array}
\right\rangle.
$$

Then the quotient group $B_n(p)$ is a finite group if and only if $(n,p)$ corresponds to the type of one of the five Platonic solids (regular polyhedra).

Furthermore, the order of the finite group $B_n(p)$ is given by
$$
\left|B_n(p)\right|=\left(\frac{f}{2}\right)^{n-1} n!
$$
where $f$ is the number of faces of the Platonic solid of type $(n,p)$.
\end{thm}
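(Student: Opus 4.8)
The plan is to realize $B_n(p)$ as a unitary (complex) reflection group and to read off both finiteness and the order from an associated Hermitian form, in exact analogy with the spherical/Euclidean/hyperbolic trichotomy for Coxeter groups. Set $\zeta = e^{2\pi i/p}$ and seek a representation $\rho\colon B_n(p)\to GL_{n-1}(\mathbb{C})$ in which each $\sigma_i$ acts as a reflection of order $p$, i.e.\ $\rho(\sigma_i)$ fixes a hyperplane and multiplies a complementary root line $\mathbb{C}v_i$ by $\zeta$; concretely this is the reduced Burau representation specialized at $t=-\zeta$, which is the tool behind Assion's treatment. First I would choose the roots $v_1,\dots,v_{n-1}$ so that the defining relations hold: the commutation relations $\sigma_i\sigma_j=\sigma_j\sigma_i$ for $|i-j|\geq 2$ force $\langle v_i,v_j\rangle=0$, while the braid relation $\sigma_i\sigma_{i+1}\sigma_i=\sigma_{i+1}\sigma_i\sigma_{i+1}$ between two order-$p$ reflections forces a single prescribed value of $|\langle v_i,v_{i+1}\rangle|^2/(\langle v_i,v_i\rangle\langle v_{i+1},v_{i+1}\rangle)$ in terms of $\zeta$. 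Carrying this out produces a tridiagonal Hermitian Gram matrix $H=H(n,p)$ and a well-defined $\rho$ whose image is generated by the $\rho(\sigma_i)$.

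The finiteness criterion is then the statement that the reflection group $\rho(B_n(p))$ is finite if and only if $H$ is positive definite; when $H$ is only positive semidefinite (resp.\ indefinite) the image is an infinite Euclidean (resp.\ hyperbolic) reflection group. I would test positive-definiteness through the leading principal minors of $H$, which satisfy a Chebyshev-type two-term recurrence because $H$ is tridiagonal; solving the recurrence shows that all minors are positive exactly when $(p-2)(n-2)<4$ (restricting to $n\geq 3$, the case $n=2$ giving the cyclic group $\mathbb{Z}_p$). This is precisely the Platonic condition: combining $V-E+f=2$ with $nf=2E$ and $pV=2E$ yields $f=4p/\bigl(4-(n-2)(p-2)\bigr)$, so a finite positive value of $f$ exists if and only if $(p-2)(n-2)<4$, and the five admissible pairs $(n,p)$ are exactly the types of the Platonic solids.

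For the order, once positive-definiteness holds the image is one of the five irreducible finite complex reflection groups of rank $n-1$ (the Shephard groups $G_4,G_8,G_{16}$ for $n=3$, and $G_{25},G_{32}$ for $(n,p)=(4,3),(5,3)$), whose order equals the product of its degrees of basic invariants. Listing these degrees and multiplying, then substituting $f/2=2p/\bigl(4-(n-2)(p-2)\bigr)$, I would check in each of the five cases that the product equals $(f/2)^{n-1}\,n!$. The main obstacle, and the step that absorbs the real work in both Coxeter's and Assion's proofs, is to show that $\rho$ is \emph{faithful}, so that $B_n(p)$ has exactly the order of its reflection image rather than merely surjecting onto it. This demands a two-sided bound: the reflection group visibly satisfies the relations of $B_n(p)$, giving $|B_n(p)|\geq|\rho(B_n(p))|$, and one must match it with an upper bound for $|B_n(p)|$ obtained from a normal form or a Todd--Coxeter coset enumeration on the abstract presentation. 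In the non-positive-definite cases the same representation furnishes an element of infinite order, or an infinite Euclidean/hyperbolic quotient, which proves infiniteness and completes the equivalence.
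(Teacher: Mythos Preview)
The paper does not prove this theorem at all: it is stated with attribution to Coxeter \cite{Co} and used as background motivation, with the remark that ``it was proved differently by Assion \cite{As} using Burau's representation of the braid groups.'' There is therefore no proof in the paper to compare your proposal against.

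That said, your outline is essentially the Assion approach the paper alludes to, and it is a sound strategy. A few comments on the sketch itself. The easy direction is cleanly handled: once the Hermitian form $H(n,p)$ is not positive definite, the image of $\rho$ is an infinite affine or hyperbolic reflection group, and since $\rho$ is a homomorphism this already forces $B_n(p)$ to be infinite; no faithfulness is needed there. The hard direction, as you correctly flag, is faithfulness of $\rho$ in the five positive-definite cases. Your suggestion of matching the known order of the Shephard group $G_4,G_8,G_{16},G_{25},G_{32}$ against an upper bound for $|B_n(p)|$ obtained by coset enumeration is exactly how this is done in practice, but in a written proof you would need to either carry out that enumeration (feasible but tedious for $G_{32}$ of order $155520$) or invoke the existing identification of these Shephard groups by their Coxeter-type presentations (Shephard--Todd, Brou\'e--Malle--Rouquier). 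Simply asserting that the degrees multiply to $(f/2)^{n-1}n!$ case by case is fine for the order formula once faithfulness is in hand, but it does not by itself establish faithfulness. Coxeter's original argument, by contrast, proceeds via classical geometry and the associated real reflection group rather than the Burau/unitary route, so your approach is genuinely different from his even though both reach the same trichotomy $(p-2)(n-2)\lessgtr 4$.
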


Therefore, follows from Theorem~\ref{thm:coxeter} that there are only five finite groups $B_n(p)$ when $n,p\geq3$, namely:
\begin{table}[htb] 
\centering
\begin{tabular}{c|c || c|c}
\hline
Polyhedron & Type (n,p) & Quotient group & order   \\ \hline
tetrahedron & (3,3) & $B_3(3)$ & 24\\ 
hexahedron (cube) & (4,3) & $B_4(3)$ & 648\\ 
octahedron & (3,4) & $B_3(4)$ & 96 \\ 
dodecahedron & (5,3) & $B_5(3)$ & 155520 \\ 
icosahedron & (3,5) & $B_3(5)$ & 600 \\
\end{tabular}
\caption{Types of Platonic Solids and finite groups $B_n(p)$}
\end{table}

Motivated by this unexpected result from Coxeter's work on the classical braid groups, we are interested in exploring these quotients for surface braid groups, as we show in the following subsections.

\subsection{Braid groups over surfaces different from the sphere and the projective plane}
\label{subsec:kpi1}

Let $n\geq 2$ and let $B_n(M)$ denote the braid groups over a surface $M$. 
Compared with the case of the disk (see \cite{Co}) the group $B_n(M)(q)$ is infinite for any integer $q\geq 3$,  for closed surfaces different from the sphere and the projective plane. 
In this subsection we prove Theorem~\ref{thm:mainsurface}, where $H_1(M)$ the first homology group of the surface $M$. We will use presentations of surface braid groups that have in the set of generators the Artin generators. 
Given a group $G$ we denote its abelianization by $G^{Ab}$.

\begin{proof}[Proof of Theorem~\ref{thm:mainsurface}]
Let $q\geq 3$ and $n\geq 3$ integers and let $M$ be a closed surface 
different from the sphere and the projective plane.

\begin{enumerate}
    \item The proof of this item follows using a presentation of the braid group over orientable surfaces given in \cite[Theorem~1.4]{S}. Since the argument is similar for both cases (orientable and not) we give more details for the non-orientable case below.

    \item Let $M=\underbrace{\mathbb RP^2\# \cdots \# \mathbb RP^2}_{g \textrm{ projective planes}}$ where $g\geq 2$ is the genus of the non-orientable surface $M$. 
    We give a presentation of the abelianization of the group  $B_n(M)(q)$. To do this, we use the presentation of
$B_n(M)$ given by Scott \cite[Theorem~1.2]{S}: 

\begin{itemize}
	\item Generators: $\sigma_1,\ldots, \sigma_{n-1}$ and $\rho_{i,j}$ where $1\leq i\leq n$, $1\leq j\leq g$. 

\item Relations: all generators commutes. From this and using the Scott's presentation, we get the following information:  
\end{itemize}

\begin{enumerate}
    \item From \cite[Theorem~1.2, I(ii)]{S} follows $\sigma_i =\sigma_{i+1}$, for $i=1,\ldots,n-2$.

    \item From \cite[Theorem~1.2, III(ii)]{S} we get $\rho_{i,k} =\rho_{i+1,k}$, for $1\leq i\leq n-1$, $1\leq k\leq g$.

    \item In \cite[Theorem~1.2, II]{S}, were defined elements $A_{i,j}$ and $B_{i,j}$, for all $1\leq i < j\leq n$, as conjugates of $\sigma_i^2$. 
    From \cite[Theorem~1.2, II(iii)]{S} (see also \cite[Theorem~1.1, II(iii)]{S}) we obtain, for all $1\leq i < j\leq n$, $B_{i,j}=1$ in $\left( B_n(M)(q) \right)^{Ab}$.

    So, in $\left( B_n(M)(q) \right)^{Ab}$ it holds that $\sigma_i^2=1$, for all $1\leq i\leq n-1$, as well as $A_{i,j}=1$, for all $1\leq i < j\leq n$.

    \item As a consequence of the previous item and \cite[Theorem~1.2, II(i)]{S} (see also \cite[Theorem~1.1, II(i)]{S}) we get 
    $\rho_{i,g}^2\rho_{i,g-1}^2\cdots \rho_{i,1}^2 = 1$, for all $i=1,\ldots, n-1$.
\end{enumerate}
The other relations in \cite[Theorem~1.2]{S} does not contribute with further information about $\left( B_n(M)(q) \right)^{Ab}$. 

Since $\sigma_1^2=1$ and $\sigma_1^q=1$. So, from Lemma~\ref{lem:bezout}, $\sigma_1^d=1$, where $d=gcd(2,q)$.

Therefore, a presentation of the  abelianization of $B_n(M)(q)$ is given by:
\begin{itemize}
	\item Generators: $\sigma_1$ and $\rho_{1,j}$ for $1\leq j\leq g$. 

\item Relations: 
\end{itemize}

\begin{enumerate}
	
    \item all generators commutes, 

    \item $\sigma_1^2=1$, and $\sigma_1^q=1$, for $q\geq 3$. So, from Lemma~\ref{lem:bezout}, $\sigma_1^d=1$, for $q\geq 3$, where $d=gcd(2,q)$.  
		
	\item $\bf \rho_{1,g}^2\rho_{1,g-1}^2\cdots \rho_{1,1}^2   =   1$.
\end{enumerate}

We recall that a presentation of the fundamental group of the non-orientable surface $M$ of genus $g$ is given by
\begin{equation}\label{eq:presfundMnon}
\pi_1(M) = \bigg\langle \rho_{1}, \ldots , \rho_{g} 
 \ \bigg\vert \ 
\rho_{g}^2\rho_{g-1}^2\cdots \rho_{1}^2   =   1
\ \bigg\rangle.
\end{equation}

Hence, from the computations given above we proved this item
$$
\left( B_n(M)(q) \right)^{Ab} \cong  {\mathbb Z_d} \oplus H_1(M),
$$
where $d=gcd(2,q)$.

\item Since the first homology group of the closed surfaces different from the sphere and the projective plane are infinite:
$$
H_1(M)\cong 
\begin{cases}
{\mathbb Z}^{2g} & \text{if  $M$ is orientable of genus $g$}\\
{\mathbb Z}^{g-1}\oplus{\mathbb Z_2} & \text{if  $M$ is non-orientable of genus $g$}
\end{cases}
$$ 
then we conclude that the Coxeter-type quotient $B_n(M)(q)$ is infinite.
\end{enumerate}
\end{proof}

\subsection{The sphere and the projective plane}
\label{subsec:s2}

Now, we exhibit some information of $B_n(M)(q)$ when $M$ is either the sphere or the projective plane.

From \cite{FVB} we know that the sphere braid group with $n$ strings, $B_n(\mathbb S^2)$, admits a presentation with generators $\sigma_i$ for $i=1,2,\dots,n-1$ and relations as in \eqref{eq:presbn} plus:
\begin{itemize}
\item the surface relation  $\sigma_1\cdots \sigma_{n-2}\sigma_{n-1}^2\sigma_{n-2}\cdots \sigma_1=1$. 

\end{itemize}

Recall that a perfect group $G$ is a group such that $G=[G,G]$.

\begin{prop}
Let $q\geq 2$ and $n\geq 3$ integers. Let $d=gcd(q,\, 2(n-1))$. 
\begin{enumerate}
    \item The abelianization of $B_n(\mathbb S^2)(q)$ is isomorphic to the cyclic group $\mathbb Z_d$.
    \item If $q$ and $2(n-1)$ are coprimes then $B_n(\mathbb{S}^2)(q)$ is perfect.

\end{enumerate}

\end{prop}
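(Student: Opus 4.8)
The plan is first to produce a presentation of $B_n(\mathbb{S}^2)(q)$ and then to read its abelianization directly off that presentation. Since adjoining a relation $r=1$ to a group presentation realises exactly the quotient by the normal closure $\fecho{r}$, a presentation of $B_n(\mathbb{S}^2)(q)$ is obtained from the presentation of $B_n(\mathbb{S}^2)$ recalled just above by adding the single relation $\sigma_1^q=1$: the generators are $\sigma_1,\ldots,\sigma_{n-1}$, and the relations are the braid relations of \eqref{eq:presbn}, the surface relation $\sigma_1\cdots\sigma_{n-2}\sigma_{n-1}^2\sigma_{n-2}\cdots\sigma_1=1$, and $\sigma_1^q=1$.

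To prove item (1) I would abelianize this presentation. Each braid relation $\sigma_i\sigma_{i+1}\sigma_i=\sigma_{i+1}\sigma_i\sigma_{i+1}$ becomes, additively, $2[\sigma_i]+[\sigma_{i+1}]=[\sigma_i]+2[\sigma_{i+1}]$, which forces $[\sigma_i]=[\sigma_{i+1}]$; hence all generators collapse to a single class $\sigma$, so $\left(B_n(\mathbb{S}^2)(q)\right)^{Ab}$ is the cyclic group generated by $\sigma$ subject to the images of the two remaining relations. The surface relation is a positive word of length $(n-2)+2+(n-2)=2(n-1)$ in the generators, so it abelianizes to $\sigma^{2(n-1)}=1$, while the added relation gives $\sigma^q=1$ (every conjugate of $\sigma_1^q$ also maps to $\sigma^q$, so the full normal closure contributes nothing further). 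Thus $\left(B_n(\mathbb{S}^2)(q)\right)^{Ab}$ is $\langle \sigma \rangle$ subject to $\sigma^{2(n-1)}=1$ and $\sigma^q=1$; by \relem{bezout} these two relations are together equivalent to $\sigma^d=1$ with $d=gcd(q,2(n-1))$, giving $\left(B_n(\mathbb{S}^2)(q)\right)^{Ab}\cong\mathbb{Z}_d$.

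Item (2) then follows at once, since a group is perfect exactly when its abelianization is trivial: if $q$ and $2(n-1)$ are coprime then $d=1$, so $\mathbb{Z}_d=\{1\}$ and $B_n(\mathbb{S}^2)(q)$ is perfect.

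The computation is essentially formal; the only step requiring real care is the length count of the surface relation, where an off-by-one error would change the value of $d$. I would double-check that the word $\sigma_1\cdots\sigma_{n-2}\sigma_{n-1}^2\sigma_{n-2}\cdots\sigma_1$ indeed consists of $(n-2)+2+(n-2)=2(n-1)$ letters, which is exactly what pins down the exponent $2(n-1)$ and hence $d$.
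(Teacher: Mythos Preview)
Your proof is correct and follows essentially the same approach as the paper: abelianize the presentation of $B_n(\mathbb{S}^2)(q)$ to obtain the cyclic group $\langle \sigma \mid \sigma^q=1,\ \sigma^{2(n-1)}=1\rangle$, then apply \relem{bezout}. You have simply spelled out in more detail the collapse of the generators under the braid relations and the length count of the surface word, both of which the paper leaves implicit.
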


\begin{proof}
Let $q\geq 2$ and $n\geq 3$ integers and let $d=mcd(q,\, 2(n-1))$.
Using the presentation of $B_n(\mathbb{S}^2)$ we conclude that the abelianization of the quotient group $B_n(\mathbb{S}^2)(q)$ has the presentation 
    $$
    \setang{\sigma_1}{\sigma_1^q=1,\,  \sigma_1^{2(n-1)}=1},
    $$
where the second equality comes from the surface relation. 
Lemma~\ref{lem:bezout} implies that the order of $\sigma_1\in \left(B_n(\mathbb{S}^2)(q)\right)^{Ab}$ is equal to $d$, where $d=gcd(q, 2(n-~1))$.
From this, we proved the first item.

From the first item of this result and the hypothesis of the second item, we get $\sigma_1=1$. 
Since the abelianization of $B_n(\mathbb{S}^2)(q)$ is the trivial group, then we conclude that $B_n(\mathbb{S}^2)(q)$ is perfect, proving the second item. 
\end{proof}

For the special case of few strings, in Theorem~\ref{thm:s2} we have the result for the Coxeter-type quotient of the sphere braid group, that we prove below. When analyzing the case of four strings, we use triangle groups as defined in \cite[Appendix~I, Section~7]{MK}, see also \cite{M}.

\begin{proof}[Proof of Theorem~\ref{thm:s2}]
Let $q\geq 3$.
\begin{enumerate}
    \item Since the group $B_2(\mathbb S^2)=\Z_2$ is generated by $\sigma_1$, then the result of this item follows immediately from Lemma~\ref{lem:bezout}.

    \item Recall from \cite[Third Lemma on p.248]{FVB} (see also \cite[Proposition~2.4, Chapter~11]{MK}) that $B_3(\mathbb S^2)$ has order 12 and the elements $\sigma_1$ and $\sigma_2$ have order 4. 
    So, from Lemma~\ref{lem:bezout}, in $B_3(\mathbb S^2)$ it holds
$$  \begin{cases}
    \sigma_1^4=1, & \text{if $gcd(4,q)=4$},\\
    \sigma_1^2=1, & \text{if $gcd(4,q)=2$},\\
    \sigma_1=1, & \text{if $gcd(4,q)=1$}.
    \end{cases}
$$

From this, is clear that $B_3(\mathbb S^2)(q)\cong B_3(\mathbb S^2)$ if $gcd(4,q)=4$, and that $B_3(\mathbb S^2)(q)$ is the trivial group $\{1\}$ if $gcd(4,q)=1$. 
Finally, suppose that $gcd(4,q)=2$, then it follows from the proof of \cite[Third Lemma on p.248]{FVB} (see also the proof of \cite[Proposition~2.4, Chapter~11]{MK}) that $B_3(\mathbb S^2)(q)\cong S_3$ in this last case, completing the proof of this item.

    \item The group $B_4(\mathbb S^2)(q)$ admits the following presentation:
\begin{equation}\label{eq:presb4s2}
B_4(\mathbb S^2)(q) = \bigg\langle \sigma_1, \sigma_2 , \sigma_{3} \ \bigg\vert \ 
\begin{matrix}
\sigma_1\sigma_2\sigma_1 = \sigma_2\sigma_1\sigma_2,
\sigma_2\sigma_3\sigma_2=\sigma_3\sigma_2\sigma_3,
\sigma_1\sigma_3=\sigma_3\sigma_1, \\
\sigma_1\sigma_2\sigma_3^2\sigma_2\sigma_1=1, 
\sigma_1^q=1
\end{matrix}
\ \bigg\rangle.
\end{equation}
	
We used the GAP System \cite{GAP} to show that $B_4(\mathbb S^2)(q)$ is a finite group in the following cases:
\begin{itemize}
	\item[(q=3):] The group $B_4(\mathbb S^2)(3)$ is isomorphic to the alternating group $A_4$.

	\item[(q=4):]  In this case the group $B_4(\mathbb S^2)(4)$ has order 192.

	\item[(q=5):]  The group $B_4(\mathbb S^2)(5)$ is isomorphic to the alternating group $A_5$.
\end{itemize}

We elucidate the routine used in the GAP computations for the case $B_4(\mathbb S^2)(3)$, the other cases are similar:
\begin{lstlisting}[language=GAP]
f3 := FreeGroup( "a", "b", "c" );;
gens:= GeneratorsOfGroup(f3);;
a:= gens[1];;b:= gens[2];;c:= gens[3];;
B4S23:= f3/[ a*b*a*b^-1*a^-1*b^-1, b*c*b*c^-1*b^-1*c^-1, 
a*c*a^-1*c^-1, a^3, b^3, c^3, a*b*c^2*b*a ];
Order (B4S23); 
StructureDescription (B4S23);
\end{lstlisting}

Now, for $q\geq 6$, we show that the group $B_4(\mathbb S^2)(q)$ is infinite.
Let $\langle\langle \sigma_1\sigma_3^{-1}  \rangle\rangle$ be the normal closure of the element $\sigma_1\sigma_3^{-1}$ in $B_4(\mathbb S^2)(q)$. Then
$$
B_4(\mathbb S^2)(q)/\langle\langle \sigma_1\sigma_3^{-1}  \rangle\rangle =\langle \sigma_1, \sigma_2 \mid \sigma_1\sigma_2\sigma_1 = \sigma_2\sigma_1\sigma_2,\,   (\sigma_1\sigma_2)^3=1,\, \sigma_1^q=1\rangle.
$$

Taking $a=\sigma_1\sigma_2\sigma_1$ and  $b=\sigma_1\sigma_2$ it follows that $(ab)=\sigma_1^{-1}$ and so
	$$
	B_4(\mathbb S^2)(q)/\langle\langle \sigma_1\sigma_3^{-1}  \rangle\rangle =\langle a,\, b \mid a^2=b^3=(ab)^q=1 \rangle.
	$$ 
Hence $B_4(\mathbb S^2)(q)/\langle\langle \sigma_1\sigma_3^{-1}  \rangle\rangle$ is isomorphic to the triangular group $T(2,3,q)$ that is infinite if, and only if $q\geq 6$, see \cite[Theorem~7.1,\, Appendix~I]{MK}.
\end{enumerate}
\end{proof}

Now we move to the case of the projective plane. Recall a presentation of the braid group of the projective plane.

\begin{prop}[Section~III of \cite{VB}]\label{apB_n(P2)} The braid group of the projective plane on $n$ strings, $B_n(\R P^2)$ admits the following presentation:

\item[Generators:] $\sigma_1,\sigma_2,\dots,\sigma_{n-1},\rho_1,\rho_2,\dots, \rho_n$.

\item[Relations:] 

\
\item[I] $\sigma_i\sigma_j=\sigma_j\sigma_i$ if $|i-j|\ge 2$.

\medskip

\item[II] $\sigma_i\sigma_{i+1}\sigma_i=\sigma_{i+1}\sigma_i\sigma_{i+1}$ for $i=1,\dots,n-2$.

\medskip

\item[III] $\sigma_i\rho_j=\rho_j\sigma_i$ for $j\neq i,i+1$.

\medskip

\item[IV] $\rho_i=\sigma_i\rho_{i+1}\sigma_i$ for $i=1,\dots,n-1$.

\medskip

\item[V] $\rho_{i+1}^{-1}\rho_i^{-1}\rho_{i+1}\rho_i=\sigma_i^2$.

\medskip

\item[VI] $\rho_1^2=\sigma_1\sigma_2\cdots \sigma_{n-2}\sigma_{n-1}^2\sigma_{n-2}\cdots\sigma_2\sigma_1$.
\end{prop}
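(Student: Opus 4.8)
The presentation stated is classical and due to Van Buskirk, and the natural route to recover it is the Fadell--Neuwirth fibration method combined with a careful analysis of the low-dimensional homotopy of $\rp$. The plan is to argue by induction on $n$, using the fibration
$$
\rp\setminus\{x_1,\dots,x_{n-1}\}\hookrightarrow F_n(\rp)\longrightarrow F_{n-1}(\rp)
$$
obtained by forgetting the last point, whose fibre is a punctured projective plane and is therefore homotopy equivalent to a wedge of $n-1$ circles, with free fundamental group of rank $n-1$. The base of the induction is $B_1(\rp)=\pi_1(\rp)=\Z_2=\ang{\rho_1\mid\rho_1^2=1}$, which is precisely relation VI for $n=1$ (the product of the $\sigma_i$ being empty). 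The key structural feature to keep in mind is that, unlike the surfaces treated in Theorem~\ref{thm:mainsurface}, here $\pi_2(\rp)=\pi_2(\St)=\Z\neq 0$, so the induction cannot be started naively.

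For the inductive step I would invoke the long exact homotopy sequence of the fibration. When $n\geq 3$ the base $F_{n-1}(\rp)$ satisfies $\pi_2(F_{n-1}(\rp))=0$ (this can be verified by descending the tower of fibrations down to $F_2(\rp)$ and computing with the homotopy exact sequence), so the sequence collapses to the short exact sequence
$$
1\longrightarrow \pi_1\!\left(\rp\setminus\{x_1,\dots,x_{n-1}\}\right)\longrightarrow P_n(\rp)\longrightarrow P_{n-1}(\rp)\longrightarrow 1 .
$$
Feeding the free fibre group and the inductive presentation of $P_{n-1}(\rp)$ into the standard presentation of a group extension yields a presentation of the pure braid group $P_n(\rp)$: the new generators are the cross-cap loop $\rho_n$ and the pure-braid generators threading the previous strands, while the conjugation action of $P_{n-1}(\rp)$ on the fibre supplies the defining relations. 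The exceptional low case $n=2$, where $\pi_2(\rp)$ genuinely interferes and $P_2(\rp)$ is the finite quaternion group of order $8$, is handled by direct computation and serves, together with $n=1$, as the anchor of the induction.

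To reach the full braid group I would then use the extension $1\to P_n(\rp)\to B_n(\rp)\to S_n\to 1$ together with a set-theoretic section sending the Coxeter transpositions to the Artin generators $\sigma_i$. Combining the presentation of $P_n(\rp)$ with the presentation of $S_n$ via the standard extension procedure produces the generators $\sigma_1,\dots,\sigma_{n-1},\rho_1,\dots,\rho_n$ and the listed relations, whose geometric meaning is transparent: relations I and II are the braid relations inherited from a disk embedded in $\rp$ (compare \eqref{eq:presbn}); III records that a crossing $\sigma_i$ is disjoint from a cross-cap loop $\rho_j$ with $j\neq i,i+1$; IV transports a cross-cap loop from strand $i+1$ to strand $i$ across the crossing; and V expresses the commutator of two adjacent cross-cap loops as the full twist $\sigma_i^2$, reflecting the non-orientability of $\rp$. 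The real content of the argument is to show that no further relations are needed.

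The delicate point, and the step I expect to be the main obstacle, is relation VI, the surface relation $\rho_1^2=\sigma_1\cdots\sigma_{n-2}\sigma_{n-1}^2\sigma_{n-2}\cdots\sigma_1$. Geometrically it asserts that taking the first strand twice around the cross-cap is the same as sweeping it once around the whole surface, and it is exactly this identity that encodes the nonvanishing of $\pi_2(\rp)$ (for the sphere the analogous statement is the surface relation $\sigma_1\cdots\sigma_{n-1}^2\cdots\sigma_1=1$ used for $B_n(\St)$ in \cite{FVB}). Establishing it rigorously, and verifying completeness of the relation set, requires controlling the connecting homomorphism of the homotopy sequence rather than merely knowing that it vanishes; this is where the bookkeeping of the fibration method concentrates, and where following Van Buskirk's original geometric argument, or the systematic treatment of surface braid group presentations in the style of Scott~\cite{S}, is essentially unavoidable.
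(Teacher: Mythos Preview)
The paper does not prove this proposition: it is stated as a quotation from Van Buskirk's original article (the bracket \emph{Section~III of \cite{VB}} in the heading signals this), and is used only as input for the short computation of the abelianisation of $B_n(\rp)(q)$ that follows. There is therefore no proof in the paper for your attempt to be compared against.

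That said, your outline is a correct sketch of the standard route to such presentations, and is essentially the strategy Van Buskirk himself uses: induct on $n$ via the Fadell--Neuwirth fibration, handle the anomalous low cases $n=1,2$ directly (where $\pi_2(\rp)\neq 0$ intervenes and $P_2(\rp)\cong Q_8$), observe that $\pi_2(F_{n-1}(\rp))=0$ for $n\geq 3$ so that the long exact sequence of the fibration yields a short exact sequence of fundamental groups, and then pass from $P_n(\rp)$ to $B_n(\rp)$ through the extension by $S_n$. Your identification of relation~VI as the place where the nonvanishing of $\pi_2$ is encoded, and your remark that verifying completeness of the relations requires controlling the connecting homomorphism rather than merely its vanishing, are both accurate. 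The sketch is not a proof---the bookkeeping you allude to is substantial---but nothing in it is wrong, and it is the approach one would expect.
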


For the case of braid groups over the projective plane we have the following.

\begin{prop}
Let $q\geq 2$ and $n\geq 2$ integers. 
The abelianization of the group  $B_n(\R P^2)(q)$ is isomorphic to $\Z_2$ if $q$ is odd, otherwise it is the Klein four group $\Z_2\oplus \Z_2$.
\end{prop}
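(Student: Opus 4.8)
The plan is to compute the abelianization directly from Van Buskirk's presentation in Proposition~\ref{apB_n(P2)}, imposing commutativity of all the generators together with the extra relation $\sigma_1^q=1$, and then reading off the resulting abelian group on the generators that survive.

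First I would abelianize the braid relations. Relation~II, $\sigma_i\sigma_{i+1}\sigma_i=\sigma_{i+1}\sigma_i\sigma_{i+1}$, becomes $\sigma_i=\sigma_{i+1}$ once the generators commute, so all the $\sigma_i$ collapse to a single class, which I denote $\sigma$; relations~I and~III contribute nothing. The crucial input is relation~V, $\rho_{i+1}^{-1}\rho_i^{-1}\rho_{i+1}\rho_i=\sigma_i^2$: its left-hand side is a commutator and hence trivial in the abelianization, forcing $\sigma^2=1$. With $\sigma^2=1$ in hand, relation~IV, $\rho_i=\sigma_i\rho_{i+1}\sigma_i$, abelianizes to $\rho_i=\sigma^2\rho_{i+1}=\rho_{i+1}$, so all the $\rho_i$ also collapse to a single class $\rho$. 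Finally relation~VI, whose right-hand side is a product of $2(n-1)$ of the $\sigma_i$, abelianizes to $\rho^2=\sigma^{2(n-1)}=1$.

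At this stage the abelianization is generated by $\sigma$ and $\rho$ subject to $\sigma^2=\rho^2=1$, that is, it is already $\mathbb{Z}_2\oplus\mathbb{Z}_2$ before the Coxeter relation is imposed. Adding $\sigma_1^q=1$ contributes $\sigma^q=1$; together with $\sigma^2=1$ and Lemma~\ref{lem:bezout} this yields $\sigma^{\gcd(2,q)}=1$. When $q$ is even, $\gcd(2,q)=2$ and nothing new happens, so the abelianization remains $\mathbb{Z}_2\oplus\mathbb{Z}_2$; when $q$ is odd, $\gcd(2,q)=1$ forces $\sigma=1$, leaving only $\rho$ of order $2$, so the abelianization is $\mathbb{Z}_2$.

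The one point requiring care, and the step I expect to be the main obstacle, is confirming that the two classes $\sigma$ and $\rho$ stay genuinely independent, so that the answer is the full $\mathbb{Z}_2\oplus\mathbb{Z}_2$ (in the even case) rather than a proper quotient. This amounts to checking that none of the remaining relations—in particular relation~VI, after the exponent count $2(n-1)$ has been verified—secretly identifies a power of $\sigma$ with a power of $\rho$. I would settle this by exhibiting the surjection from the presented abelian group onto $\mathbb{Z}_2\oplus\mathbb{Z}_2$ sending $\sigma$ and $\rho$ to the two standard generators and verifying that every abelianized relation is satisfied, which shows that no further collapse can occur.
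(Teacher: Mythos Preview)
Your proposal is correct and follows precisely the approach the paper sketches: abelianize Van Buskirk's presentation (Proposition~\ref{apB_n(P2)}) and then apply Lemma~\ref{lem:bezout} to combine $\sigma^2=1$ with $\sigma^q=1$. The paper's own proof is a single sentence invoking exactly these two ingredients, so your write-up simply supplies the details (the collapse of the $\sigma_i$ and $\rho_i$, the role of relation~V in forcing $\sigma^2=1$, and the verification that no further collapse occurs) that the paper leaves implicit.
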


\begin{proof}
We obtain the result from Lemma~\ref{lem:bezout} and the presentation of $B_n(\R P^2)$ given by Van Buskirk in \cite{VB} (see Proposition~\ref{apB_n(P2)} and also \cite[page~202, Theorem~4.1]{MK}). 
\end{proof}

\begin{rem}
   Except for the information of Theorem~\ref{thm:s2}, we do not know under which conditions on $n$ and $q$ the groups $B_n(M)(q)$ are finite, when $M$ is either the sphere or the projective plane.

\end{rem}

\section{Coxeter-type quotients and crystallographic surface braid groups}\label{sec:cryst}

The quotients of surface braid groups $B_n(M)$ by the commutator subgroup of the respective pure braid group $[P_n(M),P_n(M)]$ considered in this subsection were deeply studied in \cite{GGO} for the case of the disk and in \cite{GGOP} for the case of closed surfaces, in both cases exploring its connection with crystallographic groups.

In what follows,  we analyze the Coxeter-type quotient groups $\faktor{B_n(M)}{[P_n(M),P_n(M)]}(q)$ by adding to the presentation of $\faktor{B_n(M)}{[P_n(M), P_n(M)]}$ the relation $\sigma_1^q=1$, for braid groups over closed orientable surfaces and also for the disk.

\subsection{Braid groups over the the disk}

Unlike the case of the Coxeter quotient of the Artin braid group \cite{Co}, see Theorem~\ref{thm:coxeter}, for all $n,q \ge 3$ the Coxeter-type quotient $\faktor{B_n}{[P_n,P_n]}(q)$ is finite. 
The following result is part of the Dissertation Thesis of the third author, see \cite[Theorem~3.3]{Sa}.

\begin{thm}
\label{Coxeimpar}
Let $n,q \ge 3$ and $k\in\N$. For any integer number $q\geq 3$, the group $\faktor{B_n}{[P_n,P_n]}(q)$ is finite. 
\begin{enumerate}
	\item [(a)] If $q=2k+1$, then $\faktor{B_n}{[P_n,P_n]}(q)$ is isomorphic to $\Z_q$.
	\item [(b)] When $q=2k$, then $\faktor{B_n}{[P_n,P_n]}(q)$ has order $\frac{n(n-1)k}{2}\cdot n!$.
\end{enumerate}
    \end{thm}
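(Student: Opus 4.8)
The plan is to analyze the group $\faktor{B_n}{[P_n,P_n]}(q)$ by first understanding the structure of $G_n := \faktor{B_n}{[P_n,P_n]}$ itself, and then imposing the additional relation $\sigma_1^q = 1$. Since $[P_n, P_n]$ is a normal subgroup of $B_n$ (being characteristic in $P_n$, which is normal in $B_n$), we have the short exact sequence $1 \to \faktor{P_n}{[P_n,P_n]} \to G_n \to S_n \to 1$, where $\faktor{P_n}{[P_n,P_n]} = P_n^{Ab} \cong \Z^{\binom{n}{2}}$ is the abelianization of the pure braid group, a free abelian group generated by the classes $A_{i,j}$ of the standard generators. This presents $G_n$ as a crystallographic group, as recalled from \cite{GGO}, with the $S_n$-action on $\Z^{\binom{n}{2}}$ given by permuting indices. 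First I would fix an explicit presentation of $G_n$ using the Artin generators $\sigma_i$ together with the relations of $S_n$ deformed by the $A_{i,j}$, and record how $\sigma_1^2$ maps to the generator $A_{1,2}$ of the abelian part.

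The key computation is to determine the image of the normal closure $\fecho{\sigma_1^q}$ inside $G_n$. First I would observe that in $G_n$ the element $\sigma_1^2$ lies in (and in fact generates, under conjugation) a specific part of the free abelian kernel $\Z^{\binom{n}{2}}$; conjugating $\sigma_1^2$ by coset representatives of $S_n$ permutes the generators $A_{i,j}$, so the normal closure of $\sigma_1^2$ in $G_n$ is the full subgroup $\Z^{\binom{n}{2}}$ generated by all the $A_{i,j}$. The crux is then to split into the parity cases. When $q = 2k+1$ is odd, the relation $\sigma_1^q = \sigma_1 \cdot (\sigma_1^2)^k = 1$ combined with $\sigma_1^2 \in \Z^{\binom{n}{2}}$ forces, after abelianizing appropriately and using that the $A_{i,j}$ generate the kernel, that the entire kernel is killed and only a cyclic quotient $\Z_q$ generated by the image of $\sigma_1$ survives; here I would invoke Lemma~\ref{lem:bezout} with the orders coming from the odd exponent together with the order-two behaviour inherited from $S_n$. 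When $q = 2k$ is even, the relation $\sigma_1^{2k} = (\sigma_1^2)^k = 1$ only forces the subgroup $k\Z^{\binom{n}{2}}$ of the free abelian kernel to be trivial, so the kernel collapses to $(\Z_k)^{\binom{n}{2}}$, and the extension by $S_n$ survives, giving order $k^{\binom{n}{2}} \cdot n! = \frac{n(n-1)k}{2}\cdot n!$ after checking $\binom{n}{2} = \frac{n(n-1)}{2}$ matches the exponent — though I would double-check whether the stated order corresponds to the kernel being $(\Z_k)^{\binom{n}{2}}$ or a smaller quotient, reconciling the exponent on $k$ with the formula $\frac{n(n-1)k}{2}$.

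I expect the main obstacle to be verifying precisely which relations survive in the quotient and proving that the abelian kernel collapses exactly as claimed, rather than to something larger or smaller. The delicate point is that imposing $\sigma_1^q = 1$ is \emph{not} the same as imposing $(\sigma_1^2)^{q/2} = 1$ directly in the kernel when $q$ is odd, because $\sigma_1$ itself does not lie in the kernel; one must carefully track the interaction between the $S_n$-quotient (where $\sigma_1$ has order two) and the abelian part. Concretely, the hardest step is showing that in the odd case the normal closure of $\sigma_1^q$ contains enough elements to kill all of $\Z^{\binom{n}{2}}$ while leaving a clean $\Z_q$, and in the even case identifying the resulting extension and confirming it does not split off extra factors; I would handle this by a direct presentation manipulation, rewriting each relation in terms of the $A_{i,j}$ and the $\sigma_i$, and then reading off the abelianized kernel as a quotient of $\Z^{\binom{n}{2}}$ under the $S_n$-action, finally assembling the order from the exact sequence.
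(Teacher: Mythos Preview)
Your treatment of the even case~(b) matches the paper's argument exactly: $\sigma_1^{2k}=A_{1,2}^k=1$, conjugation gives $A_{i,j}^k=1$ for all $i<j$, the kernel collapses to $(\Z_k)^{\binom{n}{2}}$, and the short exact sequence yields the order. Your instinct to double-check the formula is correct: the kernel has order $k^{n(n-1)/2}$, so the group order is $k^{n(n-1)/2}\cdot n!$, and the paper's stated expression $\frac{n(n-1)k}{2}\cdot n!$ is a typo for this.

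For the odd case~(a), however, your sketch contains a genuine misstep. You write that the relation $\sigma_1\cdot(\sigma_1^2)^k=1$ ``forces \ldots\ that the entire kernel is killed and only a cyclic quotient $\Z_q$ \ldots\ survives'', and you propose to invoke Lemma~\ref{lem:bezout} using the ``order-two behaviour inherited from $S_n$''. Both points are off. In $G_n=\faktor{B_n}{[P_n,P_n]}$ the element $\sigma_1$ does \emph{not} have order two --- $\sigma_1^2=A_{1,2}$ is a nontrivial element of infinite order in the free abelian kernel --- so there is no relation $\sigma_1^2=1$ available for a B\'ezout argument. And it is not the kernel that gets killed; quite the opposite: the relation $\sigma_1=A_{1,2}^{-k}$ drags $\sigma_1$ (and by conjugacy every $\sigma_i$) \emph{into} the abelian kernel, so it is the $S_n$-quotient that dies. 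The paper's mechanism is precisely this: once $\sigma_i=A_{i,i+1}^{-k}$ lies in the abelian part, all $\sigma_i$ commute with one another, and then the braid relation $\sigma_i\sigma_{i+1}\sigma_i=\sigma_{i+1}\sigma_i\sigma_{i+1}$ together with commutativity forces $\sigma_i=\sigma_{i+1}$, collapsing everything to $\langle\sigma_1\mid\sigma_1^q=1\rangle\cong\Z_q$. Your decomposition $\sigma_1^q=\sigma_1\cdot(\sigma_1^2)^k$ is exactly the right first move, but the conclusion to draw from it is that each $\sigma_i$ is now an element of the commuting $A$'s, not that the $A$'s vanish.
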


\begin{proof}

Let $n,q \ge 3$ and suppose that $\sigma_1^q=1$. The integer $q$ is equal to $2k+r$, with $0\leq r\leq 1$ and $r,k\in\N$.

For item~$(a)$, as a consequence of the presentation of the Artin braid group $B_n$ given in \eqref{eq:presbn} we get $\sigma_i^{-1}\sigma_{i+1}\sigma_i=\sigma_{i+1}\sigma_i\sigma_{i+1}^{-1}$, for all $1\le i\le n-2$, and so $\sigma_i^q=1$, for all $1\le i\le n-2$. Hence, $\sigma_i=\sigma_i^{-2k}=A_{i,i+1}^{-k}$, for all $1\le i\le n-1$, where $A_{i,j}$ is an Artin generator of the  pure Artin braid group. 
So, in the group $\faktor{B_n}{[P_n,P_n]}(q)$ holds 
$[\sigma_i,\sigma_j]=1$, for all $1\le i<j\le n-1$. 
Therefore, 
\begin{align*}
\sigma_i\sigma_{i+1}\sigma_i=\sigma_{i+1}\sigma_i\sigma_{i+1}&\iff \sigma_i\sigma_{i+1}\sigma_i=\sigma_{i+1}\sigma_{i+1}\sigma_{i}\\
&\iff \sigma_i=\sigma_{i+1},
\end{align*}
for all $1\le i\le n-1$. 
Then, $\faktor{B_n}{[P_n,P_n]}(q)$ is isomorphic to $\langle \sigma_1\mid\sigma_1^p=1\rangle$, proving item~$(a)$. 

Now we prove item~$(b)$. By hypothesis, we have $\sigma_1^{2k}=1$. 
As before, we may conclude that $\sigma_i^{2k}=1$, for all $1\le i\le n$, so $A_{i,i+1}^k=1$, for all $1\le i\le n$. 
Recall the definition of the pure Artin generator $A_{i,j}=\sigma_{j-1}\sigma_{j-2}\cdots\sigma_{i}^2\cdots\sigma_{j-2}^{-1}\sigma_{j-1}^{-1}$. 
So, $A_{i,j}^k=1$, for all $1\le i<j\le n$. 
We recall that the group $\faktor{P_n}{[P_n,P_n]}$ is free abelian with a basis given by the classes of pure Artin generators $\{ A_{i,j} \mid 1\leq i<j\leq n \}$. 
Hence, in $\faktor{B_n}{[P_n,P_n]}(q)$ the natural projection of the group  $\faktor{P_n}{[P_n,P_n]}\leq \faktor{B_n}{[P_n,P_n]}$ is isomorphic to $\underbrace{\Z_k\times\cdots\times\Z_k}_\frac{n(n-1)}{2}$. 
From the above we get the following short exact sequence
$$
1\longrightarrow \underbrace{\Z_k\times\cdots\times\Z_k}_\frac{n(n-1)}{2}{\longrightarrow} \faktor{B_n}{[P_n,P_n]}(q) {\longrightarrow} S_n\longrightarrow 1.
$$
Therefore the middle group $\faktor{B_n}{[P_n,P_n]}(q)$ has finite order $\frac{n(n-1)k}{2}\cdot n!$ and with this we verify item~(b). 

From items~$(a)$ and~$(b)$ we proved that for any integer number $q\geq 3$, the group $\faktor{B_n}{[P_n,P_n]}(q)$ is finite.
\end{proof}

\subsection{Braid groups over orientable surfaces}

Let $M$ be a compact, orientable surface without boundary of genus $g\geq 1$, and let $n\geq 2$. 
We will use the presentation of $\faktor{B_n(M)}{[P_n(M), P_n(M)]}$ given in \cite{GGOP}.

\begin{prop}{\cite[Proposition~9]{GGOP}}\label{prop:pres_quo}
Let $M$ be a compact, orientable surface without boundary of genus $g\geq 1$, and let $n\geq 1$. The quotient group $\faktor{B_n(M)}{[P_n(M), P_n(M)]}$ has the following presentation:

\noindent
Generators: $\sigma_{1},\ldots, \sigma_{n-1}, a_{i,r},\,1\leq i \leq n,\, 1\leq r \leq 2g$.

\noindent
Relations:
\begin{enumerate}[(a)]
\item\label{it:pq1} the Artin relations
\begin{equation*}\label{eq:artin}
\begin{cases}
\sigma_{i}\sigma_{j}=\sigma_{j}\sigma_{i} & \text{for all $1\leq i,j\leq n-1$, $\left\lvert i-j\right\rvert \geq 2$}\\
\sigma_{i}\sigma_{i+1}\sigma_{i}=\sigma_{i+1}\sigma_{i}\sigma_{i+1} & \text{for all $1\leq i\leq n-2$.}
\end{cases}
\end{equation*}

\item\label{it:pq2} $\sigma^{2}_{i}=1$, for all $i=1,\ldots, n-1$.
	
\item\label{it:pq3} $[a_{i,r},a_{j,s}]=1$, for all $i,j=1,\ldots,n$ and $r,s=1,\ldots, 2g$.
	
\item\label{it:pq4} $\sigma_{i}a_{j,r}\sigma^{-1}_{i}= a_{\tau_{i}(j),r}$ for all $1\leq i\leq n-1$, $1\leq j\leq n$ and $1\leq r\leq 2g$. 
\end{enumerate}
\end{prop}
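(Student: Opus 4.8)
Since the statement is a presentation result quoted from \cite{GGOP}, my plan is to reconstruct it from scratch, building it up from a known presentation of the full braid group $B_n(M)$ by imposing the relations that abelianize the pure subgroup. First I would record that $[P_n(M),P_n(M)]$ is characteristic in the normal subgroup $P_n(M)$ of $B_n(M)$, hence normal in $B_n(M)$, so the quotient is well defined and the exact sequence \eqref{eq:ses} descends to
$$
1\longrightarrow \faktor{P_n(M)}{[P_n(M),P_n(M)]}\longrightarrow \faktor{B_n(M)}{[P_n(M),P_n(M)]}\longrightarrow S_n\longrightarrow 1.
$$
The right-hand term is generated by the images of the $\sigma_i$; the braid relations~(a) are inherited directly from $B_n(M)$, while the remaining Coxeter relation $\sigma_i^2=1$ (relation~(b)) and the identification of the kernel will be derived below. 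Thus the whole argument reduces to pinning down the kernel $P_n(M)^{Ab}$ together with the conjugation action of $S_n$ on it.

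Next I would start from Scott's presentation of $B_n(M)$ for the orientable genus-$g$ surface \cite[Theorem~1.4]{S}, whose generators are the Artin generators $\sigma_1,\dots,\sigma_{n-1}$ together with surface generators attached to a single strand, and use the braid and crossing relations to propagate those surface generators to every strand, producing the family $a_{i,r}$, $1\le i\le n$, $1\le r\le 2g$. Imposing $[P_n(M),P_n(M)]=1$ then forces all pure braids to commute; since each $a_{i,r}$ is a pure braid, their images commute, which is relation~(c), and conjugation by $\sigma_i$ reduces to the transposition of the strand index, which is relation~(d).

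The crucial and most delicate step is relation~(b), $\sigma_i^2=1$. Here I would invoke the surface relations of Scott's presentation to express the pure braid generator $A_{i,i+1}=\sigma_i^2$, and more generally each $A_{i,j}$, as a product of commutators of the surface generators attached to the relevant strands. Every such commutator lies in $[P_n(M),P_n(M)]$ and hence dies in the quotient, so $A_{i,j}=1$ and in particular $\sigma_i^2=1$; this simultaneously shows that the $A_{i,j}$ are redundant and drop out of the generating set, so that $P_n(M)^{Ab}\cong\Z^{2gn}$ is freely generated by the classes of the $a_{i,r}$. The main obstacle is precisely this computation: one must check, surface relation by surface relation, that nothing survives among the $a_{i,r}$ beyond commutativity (so the kernel has the expected rank) and that no relation linking the $\sigma_i$ to the $a_{i,r}$ other than~(d) is imposed.

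Finally I would verify completeness of the relation set rather than merely trust a Tietze simplification. Using a lift of $S_n$ as a Schreier transversal, a Reidemeister--Schreier computation produces a presentation of the kernel on the Schreier generators; matching it against the free abelian group $\Z^{2gn}$ isolates exactly relations~(b), (c) and~(d) and confirms that the families~(a)--(d) generate all relations. A clean consistency check is that the resulting group has the structure $\Z^{2gn}\rtimes S_n$, with $S_n$ permuting the $n$ blocks of $2g$ generators; this matches the abelianization and order data and certifies that the presentation is neither over- nor under-determined.
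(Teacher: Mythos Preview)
The paper does not prove this statement at all: Proposition~\ref{prop:pres_quo} is quoted verbatim from \cite[Proposition~9]{GGOP} and used as a black box in the proof of Proposition~\ref{prop:surfcrystcoxeter}. There is therefore no ``paper's own proof'' to compare your proposal against.

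That said, your sketch is a reasonable outline of how the result is actually established in \cite{GGOP}. The structure you propose --- pass to the extension $1\to P_n(M)^{Ab}\to B_n(M)/[P_n(M),P_n(M)]\to S_n\to 1$, identify $P_n(M)^{Ab}\cong\Z^{2gn}$ on the surface generators $a_{i,r}$, and read off the $S_n$-action by permutation of strand indices --- is exactly the skeleton of the argument. Your identification of the crucial step is also correct: one must show that in the orientable case the generators $A_{i,j}$ (in particular $\sigma_i^2=A_{i,i+1}$) become trivial modulo $[P_n(M),P_n(M)]$ because the surface relation expresses them as products of commutators of the $a_{i,r}$, and that no further relations among the $a_{i,r}$ survive. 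One caution: your Reidemeister--Schreier verification presupposes that the images of the $\sigma_i$ already generate a copy of $S_n$ inside the quotient, i.e.\ that the extension is split; this is true here (the permutation action on $\Z^{2gn}$ gives an obvious semidirect product model, and one checks the universal map from the presented group to $\Z^{2gn}\rtimes S_n$ is an isomorphism), but it should be stated rather than assumed when you speak of ``a lift of $S_n$ as a Schreier transversal''.
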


In \cite[Figure~9]{GM} we may see geometrically the elements $a_{i,r}$ of Proposition~\ref{prop:pres_quo}. 
We have the following result about Coxeter-type quotients and the quotient groups considered in  \cite{GGOP}. 

\begin{prop}
\label{prop:surfcrystcoxeter}
    Let $M$ be a compact, orientable surface without boundary of genus $g\geq 1$, $q\geq 3$ and let $n \geq 2$. The group $\faktor{B_n(M)}{[P_n(M), P_n(M)]}(q)$ is infinite.
\begin{enumerate}
    \item\label{it:cryst1} If $q$ is odd, the Coxeter-type quotient $\faktor{B_n(M)}{[P_n(M), P_n(M)]}(q)$ is isomorphic to a free Abelian group of rank $2g$.
    
    \item\label{it:cryst2} The group $\faktor{B_n(M)}{[P_n(M), P_n(M)]}(q)$ is isomorphic to the crystallographic group quotient $\faktor{B_n(M)}{[P_n(M), P_n(M)]}$ if $q$ is even.
\end{enumerate} 
\end{prop}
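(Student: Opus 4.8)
The plan is to begin from the presentation of $\faktor{B_n(M)}{[P_n(M), P_n(M)]}$ recorded in \reprop{pres_quo} and to impose the single additional relation $\sigma_1^q=1$, analysing the two parities of $q$ separately. The first step is to observe that the braid relation in item~\ref{it:pq1} alone forces all the $\sigma_i$ to be conjugate: a direct computation gives $\sigma_{i+1}=(\sigma_i\sigma_{i+1})\,\sigma_i\,(\sigma_i\sigma_{i+1})^{-1}$ for each $1\le i\le n-2$, so by induction every $\sigma_i$ is conjugate to $\sigma_1$. Since $\sigma_1^q=1$ holds in the quotient and a conjugate of the identity is the identity, I would deduce $\sigma_i^q=1$ for all $i$. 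Combining this with the relation $\sigma_i^2=1$ from item~\ref{it:pq2} and applying \relem{bezout}, I obtain $\sigma_i^{\gcd(2,q)}=1$ for every $i$.

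For item~\ref{it:cryst1}, when $q$ is odd one has $\gcd(2,q)=1$, whence $\sigma_i=1$ for all $i$. Feeding $\sigma_i=1$ back into the presentation collapses it: relations~\ref{it:pq1} and~\ref{it:pq2} become trivial, while relation~\ref{it:pq4} reduces to $a_{j,r}=a_{\tau_i(j),r}$, that is $a_{i,r}=a_{i+1,r}$, so that for each fixed $r$ all of $a_{1,r},\dots,a_{n,r}$ are identified with a single element $a_r$. The only surviving relations are those of item~\ref{it:pq3}, which make the $a_r$ pairwise commute, and nothing constrains their orders. Hence the quotient is the free Abelian group $\langle a_1,\dots,a_{2g}\mid [a_r,a_s]=1\rangle\cong\Z^{2g}$, which is infinite because $g\ge 1$.

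For item~\ref{it:cryst2}, when $q$ is even one has $\gcd(2,q)=2$, so $\sigma_1^q=(\sigma_1^2)^{q/2}=1$ is already a consequence of item~\ref{it:pq2}; adjoining the relation $\sigma_1^q=1$ therefore changes nothing, and the quotient coincides with the crystallographic group $\faktor{B_n(M)}{[P_n(M), P_n(M)]}$ studied in \cite{GGOP}. This group is infinite, since it is an extension of $S_n$ by the abelian translation subgroup $\faktor{P_n(M)}{[P_n(M),P_n(M)]}$, which is free Abelian of positive rank when $g\ge 1$. The step I expect to require the most care is the conjugacy argument that upgrades $\sigma_1^q=1$ to $\sigma_i^q=1$ for every $i$, and, in the odd case, checking that the collapse of the presentation leaves no hidden relation among the $a_r$ beyond commutativity, so that one genuinely lands on a free Abelian group rather than on a proper quotient of it.
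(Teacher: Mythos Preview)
Your proof is correct and follows essentially the same route as the paper: both start from the presentation in \reprop{pres_quo}, use relation~\ref{it:pq2} ($\sigma_i^2=1$) together with \relem{bezout} to conclude $\sigma_i^{\gcd(2,q)}=1$, and then split into the two parities. Your write-up is in fact more complete than the paper's: you make explicit the conjugacy step propagating $\sigma_1^q=1$ to all $\sigma_i^q=1$, and you actually carry out the collapse of the presentation in the odd case to verify that the remaining group is free Abelian on the $2g$ generators $a_1,\dots,a_{2g}$, whereas the paper simply asserts item~\ref{it:cryst1} once $\sigma_i=1$ is obtained.
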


\begin{proof}
Let $M$ be a compact, orientable surface without boundary of genus $g\geq 1$, and let $n \geq 2$.
    We shall use the presentation of the quotient groups
    $\faktor{B_n(M)}{[P_n(M), P_n(M)]}$ given in Proposition~\ref{prop:pres_quo}.
    From Proposition~\ref{prop:pres_quo}\eqref{it:pq2} we have that in $\faktor{B_n(M)}{[P_n(M), P_n(M)]}(q)$ it holds  $\sigma_i^2=1$, for all $1\leq i\leq n-1$. 
    
    Hence, for all $1\leq i\leq n-1$, it is true that $\sigma_i^2=1$ and $\sigma_i^q=1$.  If $q$ is odd, then from Lemma~\ref{lem:bezout} we get $\sigma_i=1$, for all $1\leq i\leq n-1$, proving item~\eqref{it:cryst1}.  In the case that $q$ is even then, for all $1\leq i\leq n-1$, $\sigma_i^2=1$ independently of the number $q$, getting item~\eqref{it:cryst2}. 
\end{proof}

\end{document}